\newcommand{\gin}{\operatorname{gin}}
\newcommand{\ini}{\operatorname{In}}
\newcommand{\tra}{\operatorname{Tr}}
\newcommand{\tr}{\operatorname{Tr}}
\theoremstyle{plain}
\newtheorem{theorem}{Theorem}
\newtheorem{lemma}[theorem]{Lemma}
\newtheorem{conjecture}[theorem]{Conjecture}
\newtheorem{proposition}[theorem]{Proposition}
\theoremstyle{definition}
\theoremstyle{remark}
\newtheorem{remark}[theorem]{Remark}
\newcounter{hours}\newcounter{minutes}
\newcommand{\printtime}{%
        \setcounter{hours}{\time/60}%
        \setcounter{minutes}{\time-\value{hours}*60}%
        \thehours\,h\ \theminutes\,min}
\begin{document}

\title[ Gin of modular invariants]
{ generic initial ideals of modular polynomial invariants}
\date{\today \printtime}

\author{ Bek\.ir Dan{\i}\c{s}}
\author{M\"uf\.it Sezer}
\email{bekir.danis@bilkent.edu.tr}

\email{sezer@fen.bilkent.edu.tr}
\address { Department of Mathematics, Bilkent University,
 Ankara 06800 Turkey}
\thanks{Second author is supported by a grant from T\"ubitak:115F186}

\subjclass[2000]{13A50} \keywords{modular polynomial invariants,
generic initial ideals}

\begin{abstract}
We study  the generic initial ideals ($ \gin $) of certain ideals
that arise in modular  invariant theory.  For all cases an
explicit generating set is known we compute the generic initial
ideal of the Hilbert ideal of a cyclic group of prime order  for
all monomial orders. We also consider the Klein four group and
note that its Hilbert ideals are Borel fixed with certain
orderings of the variables. In all situations we consider, it is
possible to select a monomial order such that  the gin of the
Hilbert ideal is equal to its initial ideal. As an incidental
result we show that $\gin$ respects a permutation of the variables
in the monomial order.
\end{abstract}

\maketitle

\section{Introduction}
For a homogeneous ideal $I$ in a polynomial ring, its generic
initial ideal $\gin_{>}(I)$ with respect to a term order $>$  is
the ideal of initial monomials after a generic change of
coordinates. The generic initial ideal  measures how close to a
$>$-segment ideal $I$ can be made by  a linear and invertible
substitution. It encodes much information on the combinatorial,
geometrical and homological  properties of $I$ and the associated
variety  and plays an important role in the computational aspects
of commutative algebra and algebraic geometry. For instance
generic initial ideals were used in Hartshorne's proof of the
connectedness of Hilbert schemes. Describing $ \gin_{>}(I)$  is a
very difficult task in general and despite their significance,
there are relatively few classes of ideals for which generic
initial ideals are explicitly computed. We refer the reader  to
\cite{MR1648665}  for a survey of results on this matter.

In this paper we study generic initial ideals that arise in
invariant theory. We consider  a finite dimensional module $V$ of
a group $G$ over an infinite field $F$. There is an induced action
on the symmetric algebra $F[V]:=S(V^*)$ on $V^*$. This is a
polynomial algebra $F[x_1, \dots , x_n]$, where $x_1, \dots , x_n$
is a basis for   $V^*$. A classical object is the ring of
invariants $F[V]^G:=\{ f\in F[V] \mid \sigma (f)=f  \; \text { for
all } \sigma \in G\}$ which is a graded subalgebra of $F[V]$.  The
ideal in $F[V]$ generated by  homogeneous invariants of positive
degree  is the Hilbert ideal of $V$ and we denote it by $H(V)$.
The Hilbert ideal and its quotient in $F[V]$ often contain
information about the invariant ring itself.  It plays an
important role in finding generators of  $F[V]^G$ or obtaining
degree bounds for them.  When the characteristic of the field
divides the order of the   group, i.e., $V$ is a modular module,
the   invariant ring is  more complicated and difficult to obtain.
Invariants are not known in general  even in the simplest modular
situation when $G$ is a cyclic  group of prime order. For this
group we consider the cases where an explicit generating set is
known for the Hilbert ideal, and we compute the  generic initial
ideals of these Hilbert ideals for all orders. It turns out that,
with the upper triangular ordering of the variables,  $\gin$ is
equal to the initial ideal of the Hilbert ideal in  these cases. A
natural question one encounters in these computations is how
$\gin$ changes when the variables in the monomial order are
permuted. Since we did not find a source in the literature that
addresses this question, we include a compact proof of the fact
that $\gin$ is also permuted in the same way in Section
\ref{permutation}.
 Case by case analysis and
computations of the $\gin$ of the modular Hilbert ideals of a
cyclic group of prime order are done in Section \ref{hilbert}. In
the final section  we note that any Hilbert ideal of the Klein
four group in characteristic two is Borel fixed with the right
choice of the monomial order and therefore gin is equal to the
Hilbert ideal itself. We feel that these findings support
Conjecture \ref{conj} which states that  for a given module over
characteristic $p$ of a  $p$-group there is always a choice of
basis for the module  and a monomial order such that gin of the
Hilbert ideal is equal to its initial ideal.

We refer the reader to \cite[\S 4]{MR2724673} and to
\cite{MR2759466, MR3445218} for more background on generic initial
ideals and invariant theory, respectively.

\section{Permutation of variables and gin}
\label{permutation}

 In this section we do not consider group actions so  we let  $S$ (instead of $F[V])$ denote the poynomial ring $F[x_1, \dots , x_n]$ in $n$
 variables. Let $I$ be a homogeneous ideal in $S$. We fix a term
 order $>$ on the set of monomials in $S$. The largest monomial that appears in a polynomial $f\in S$ is called the initial monomial of $f$ and is denoted by $\ini_{>} (f)$. We
  denote the ideal in $S$ generated by the initial monomials of elements in $I$ with $\ini_{>} (I)$. Let $\pi$ be a
 permutation of $\{1, 2, \dots  , n \}$. Then $\pi$ induces an
 isomorphism of $S$ via $\pi (x_i)=x_{\pi (i)}$. Note that this isomorphism sends monomials to monomials. Let
 $>_{\pi}$ denote the term order such that

 $$\pi (M_1)>_{\pi}\pi (M_2)  \text { if and only if   } M_1>M_2.$$

 Let $S_d$ denote the $d$-th homogeneous  component of $S$ and we
 consider the $t$-th exterior power $\wedge^tS_d$ of  $S_d$.
Recall that an element $m_1\wedge m_2 \cdots \wedge m_t$, where
$m_i$ is a monomial of degree $d$ with $m_1>m_2> \cdots >m_t$, is
called a standard exterior monomial of $\wedge^tS_d$ with respect
to $>$.
  One can order
standard exterior monomials lexicographically: If $m_1\wedge m_2
\cdots \wedge m_t$ and $w_1\wedge w_2 \cdots \wedge w_t$ are two
standard exterior monomials with respect to $>$, then we set

 $$m_1\wedge m_2
\cdots \wedge m_t> w_1\wedge w_2 \cdots \wedge w_t$$ if $m_i>w_i$
for the smallest index $i$ with $m_i\neq w_i$. We denote the
largest standard exterior monomial in the support of $e\in
\wedge^tS_d$ with $\ini_{>} (e)$. Standard exterior monomials with
respect to $>_{\pi}$ are defined and  ordered similarly.

Let $\textbf{$\alpha$}\in GL_n(F)$. Then
$\textbf{$\alpha$}=(\alpha_{ij})$ induces a degree preserving
isomorphism on $S$ by $x_j\rightarrow
\alpha_{1j}x_1+\alpha_{2j}x_2+ \cdots +\alpha_{nj}x_n$ for $1\le
j\le n$. We consider the polynomial ring in the extended set of
variables $R=F[x_1, \dots , x_n, \alpha_{i,j} \mid 1\le i,j\le
n]$.  We extend $\pi$  to $R$ by letting $\pi
(\alpha_{ij})=\alpha_{\pi (i)j}$.

\begin{lemma}
Let $f\in S$ and $\alpha\in GL_n(F)$. Consider $\alpha (f)$ as a
polynomial in $x_1, \dots , x_n$ with coefficients in
$F[\alpha_{ij}]$. Let $M$ be a monomial in $S$ that appears in
$\alpha (f)$ with coefficient $c\in F[\alpha_{ij}]$. Then the
coefficient of $\pi(M)$ in $\alpha (f)$ is $\pi (c)$.
\end{lemma}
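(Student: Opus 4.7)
The plan is to observe that $\pi$, extended to $R$ as described, fixes every element in the image of $\alpha$, and then to extract coefficients.

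First I would compute $\pi(\alpha(x_j))$ directly. By definition $\alpha(x_j) = \sum_{i=1}^n \alpha_{ij} x_i$, and since $\pi$ is an algebra automorphism of $R$ with $\pi(\alpha_{ij}) = \alpha_{\pi(i)j}$ and $\pi(x_i) = x_{\pi(i)}$, we get
\[
\pi(\alpha(x_j)) \;=\; \sum_{i=1}^n \alpha_{\pi(i)j}\, x_{\pi(i)} \;=\; \sum_{i'=1}^n \alpha_{i'j}\, x_{i'} \;=\; \alpha(x_j),
\]
where the middle equality is a reindexing $i' = \pi(i)$ using that $\pi$ is a bijection on $\{1,\dots,n\}$. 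Since $\pi$ and $\alpha$ are both ring homomorphisms, multiplicativity then gives $\pi(\alpha(f)) = \alpha(f)$ for every $f \in S$.

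Next I would decompose $\alpha(f)$ along the $x$-monomials: write $\alpha(f) = \sum_M c_M M$ with $c_M \in F[\alpha_{ij}]$, where $M$ ranges over monomials in $x_1,\dots,x_n$. Applying $\pi$ and using the previous step,
\[
\sum_M c_M M \;=\; \alpha(f) \;=\; \pi(\alpha(f)) \;=\; \sum_M \pi(c_M)\,\pi(M).
\]
Reindexing the right-hand sum through the bijection $M \mapsto \pi(M)$ on monomials, and then comparing coefficients of each fixed monomial on both sides of the resulting identity (noting that distinct monomials in the $x_i$ are linearly independent over $F[\alpha_{ij}]$), yields $c_{\pi(M)} = \pi(c_M)$, which is the claim.

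There is no serious obstacle here: the only point that needs care is keeping the two actions straight, i.e., that $\pi$ acts on $R$ by permuting the row indices of the $\alpha_{ij}$ and the $x$-variables simultaneously, which is exactly what makes $\alpha(x_j)$ invariant under $\pi$. Everything else is bookkeeping with monomials and linear independence.
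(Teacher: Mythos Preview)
Your proof is correct and follows essentially the same approach as the paper: you verify $\pi(\alpha(x_j))=\alpha(x_j)$ by the reindexing $i'=\pi(i)$, extend to all $f\in S$ by multiplicativity, and then compare coefficients in the monomial expansion of $\alpha(f)$. The only difference is cosmetic---you make the reindexing and the linear-independence step slightly more explicit than the paper does.
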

\begin{proof}
Note that $\pi (\alpha (x_j))=\pi (\sum_{1\le i\le
n}\alpha_{ij}x_i)=\sum _{1\le i\le n}\alpha_{\pi (i)j}x_{\pi
(i)}=\alpha (x_j)$. Since both $\pi$ and $\alpha$ are ring
homomorphisms it follows that $\pi (\alpha (f))=\alpha (f)$ for
all $f\in S$. We write $\alpha (f)=\sum c_kM_k$, where $c_k\in
F[\alpha_{ij}]$ and $M_k$ is a monomial in $S$. Then we have
$$\alpha (f)=\pi (\alpha (f))=\sum \pi (c_k)\pi (M_k).$$ Therefore we get $\sum c_kM_k=\sum \pi (c_k)\pi (M_k)$. Since $\pi$ is
a permutation of the monomials in $S$, the assertion of the lemma
follows.
\end{proof}

There is  a Zariski open set $U\subseteq GL_n(F)$ and a monomial
ideal $J$ such that $\ini_{>}(\alpha (I))=J$ for all $\alpha\in
U$. The ideal $J$ is called the generic initial ideal with respect
to $>$ and is denoted $\gin_{>}(I)$, see \cite[4.1.3]{MR2724673}
and \cite[15.18]{MR1322960}.

\begin{theorem}
\label{permutation}
 Let $I$ be a homogeneous ideal. Then we have

$$\gin_{>_{\pi}} (I)= \pi (\gin_{>}(I)).$$
\end{theorem}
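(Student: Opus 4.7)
The plan is to work degree by degree via the exterior-algebra description of $\gin$. Fix $d$ and an $F$-basis $f_1,\dots,f_t$ of $I_d$, and regard $e:=\alpha(f_1)\wedge\dots\wedge\alpha(f_t)$ as an element of $\wedge^tS_d$ with coefficients in $F[\alpha_{ij}]$. If $e=\sum_K c_K\, M_K$ is its standard expansion with respect to $>$, then for generic $\alpha\in GL_n(F)$ the lex-largest standard exterior monomial $M_{K_0}=m_1\wedge\dots\wedge m_t$ with $c_{K_0}\neq 0$ remains the initial exterior monomial after specialization, and $\gin_{>}(I)_d$ is the $F$-span of $m_1,\dots,m_t$; the analogous description holds with $>_\pi$ in place of $>$. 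It therefore suffices to show that the $>_\pi$-leading standard exterior monomial of $e$ is $\pi(M_{K_0})=\pi(m_1)\wedge\dots\wedge\pi(m_t)$.

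The first ingredient is purely combinatorial. By the definition of $>_\pi$, $m_1>\dots>m_t$ if and only if $\pi(m_1)>_\pi\dots>_\pi\pi(m_t)$, so $\pi$ sends a $>$-standard exterior monomial to a $>_\pi$-standard one with no sign correction, and this bijection preserves the lex order (for two standard tuples the first index of disagreement is unchanged by $\pi$, and at that index $m_i>m_i'$ iff $\pi(m_i)>_\pi\pi(m_i')$). The second ingredient comes from the preceding lemma: applying it to each $f_i$ gives $\pi(\alpha(f_i))=\alpha(f_i)$ in $R$, and extending $\pi$ componentwise to $\wedge^tR_d$ yields $\pi(e)=e$. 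Writing out both sides,
$$\sum_K c_K\, M_K \;=\; e \;=\; \pi(e) \;=\; \sum_K \pi(c_K)\, \pi(M_K),$$
and the right-hand sum is, by the combinatorial step, the standard expansion of $e$ with respect to $>_\pi$. Since $\pi$ is an automorphism of $F[\alpha_{ij}]$, $c_K\neq 0$ iff $\pi(c_K)\neq 0$, and the bijection $M_K\mapsto\pi(M_K)$ preserves lex order, so the $>_\pi$-leading nonzero term on the right is $\pi(c_{K_0})\,\pi(M_{K_0})$; this yields $\gin_{>_\pi}(I)_d=\pi(\gin_>(I)_d)$ in every degree.

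The only place where real care is needed is the combinatorial step, where one must verify that no sign enters when one converts a $>$-standard wedge to a $>_\pi$-standard one and that the lex order transports correctly. Once that is in place, the $\pi$-invariance of $e$ supplied by the lemma collapses the two standard expansions onto each other, and the theorem follows.
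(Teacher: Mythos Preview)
Your proof is correct and follows essentially the same route as the paper: both work degree by degree through the exterior-algebra description of $\gin$, invoke the preceding lemma to obtain $\pi(\alpha(f_i))=\alpha(f_i)$, and then match the $>$-standard and $>_\pi$-standard expansions of $\alpha(f_1)\wedge\cdots\wedge\alpha(f_t)$ via the order-preserving bijection $M_K\mapsto\pi(M_K)$, concluding from $c_K\neq 0\Leftrightarrow\pi(c_K)\neq 0$. Your packaging via the global identity $\pi(e)=e$ is a touch cleaner than the paper's term-by-term computation, but the substance is identical.
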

\begin{proof}
Consider a homogeneous component $I_d$ of $I$ with a basis $f_1,
\dots ,f_t$. Let $m_{j_1}, \dots , m_{j_t}$   be monomials in $S$
that appear in $\alpha (f_1), \dots ,\alpha (f_t)$ with
coefficients $c_1, \dots , c_t\in F[\alpha_{ij}]$, respectively.
Assume that $c_1m_{j_1}\wedge \cdots \wedge c_tm_{j_t}$
  is a  multiple of the
standard exterior monomial $m_1 \wedge \cdots \wedge m_t$  (with
respect to $>$) in $\wedge^tS_d$. Call this multiple $c\in
F[\alpha_{ij}]$.
 Then
by the previous lemma, $\pi (m_{j_1}), \dots ,\pi(m_{j_t)}$ appear
in $\alpha (f_1), \dots ,\alpha (f_t)$ with coefficients $\pi
(c_1), \dots ,\pi (c_t)\in F[\alpha_{ij}]$, respectively. Since
ranking of the monomials in $>$ is preserved in $>_{\pi}$ after we
apply $\pi$,   it follows that the coefficient of the standard
exterior monomial   $\pi (m_{1}) \wedge \cdots \wedge \pi (m_{t})$
(with respect to $>_{\pi }$)  in $\pi ({c_1})\pi (m_{j_1}) \wedge
\cdots \wedge \pi (c_t)\pi (m_{j_t})$ is $\pi (c)$. Therefore we
have
\begin{alignat*}{1}
\alpha (f_1)\wedge \cdots \wedge \alpha (f_t)=&\sum_{m_1 > \cdots
> m_t}c(m_1, \dots , m_t) (m_1 \wedge \cdots  \wedge m_t) \\
=&\sum_{\pi (m_1) >_{\pi} \cdots >_{\pi} \pi (m_t)}\pi (c(m_1,
\dots , m_t) ) (\pi (m_1) \wedge \cdots \wedge \pi (m_t)).
\end{alignat*}
Since $\pi$ is a permutation of variables in $F[\alpha_{ij}]$,
$c(m_1, \dots , m_t)$ is the zero polynomial if and only if $\pi
(c(m_1, \dots , m_t) )$ is the zero polynomial. So we have that if
$w_1\wedge \cdots \wedge  w_t$ is the largest exterior monomial
(with respect to $>$) with the property that there is $\alpha \in
GL_n(F)$ with $\ini_{>}(\alpha (f_1) \wedge \cdots \wedge \alpha
(f_t))=w_1\wedge \dots \wedge w_t$, then $\pi(w_1)\wedge \cdots
\wedge  \pi (w_t)$ is the largest exterior monomial (with respect
to $>_{\pi}$) such that there exists $\alpha \in GL_n(F)$ with
$\ini_{>_{\pi}}(\alpha (f_1) \wedge \cdots \wedge \alpha (f_t))=
\pi(w_1)\wedge \cdots \wedge  \pi (w_t)$. Since $(\gin_{>}I)_d$
and $(\gin_{>_{\pi}}I)_d$ are generated by $w_1, \dots ,w_t$ and
$\pi (w_1), \dots ,\pi (w_t)$  (see \cite[4.1.4,
4.1.5]{MR2724673}), respectively and $d$ is arbitrary, the result
follows.
\end{proof}
The subgroup of $GL_i(F)$ consisting of upper triangular matrices
is called the Borel subgroup and we denote it by $\ss_i$.
\begin{remark}
\label{borel} There is a close connection with generic initial
ideals and Borel fixed ideals (which we use in Section
\ref{hilbert}). This connection still exists after the permutation
of variables in the monomial order but one needs to replace the
standard Borel subgroup of upper triangular matrices with the
non-standard Borel subgroup accommodating the permutation.
\end{remark}
\section{Gin of modular Hilbert ideals of cyclic groups of prime order}
\label{hilbert} In this section  $G$ denotes a cyclic group  of
prime order $p$ and we assume that the characteristic of  $F$ is
also $p$. There are exactly $p$ indecomposable $G$-modules $V_1,
\dots , V_{p}$ over $F$ and each indecomposable module $V_n$ is
afforded by a Jordan block of dimension $n$ with 1's on the
diagonal.  If $V$ is a direct sum of indecomposable  modules
$V_{n_1}, \dots , V_{n_k}$, then we write $V=\sum_{1\le j\le
k}V_{n_j}$.
In the sequel we consider the action of the Borel subgroup on
$F[V]$. We always assume that this action is compatible with the
ranking of the variables, i.e.,  the action of a matrix in the
Borel subgroup on the $i$-th variable in the monomial order is
given by the $i$-th column of the matrix, see Remark \ref{borel}.
We compute all generic initial ideals of $H(V)$ for all cases for
which an explicit generating set  for $H(V)$ is known.

\subsection{The monomial cases: $lV_2+mV_3$ and $V_4$.}

For $V=lV_2+mV_3$, we identify  $F[V]$ with   $F[x_i, y_j, z_r, \;
1\le i, j\le l+m,  \; \;  l+1\le r\le l+m]$.  For $1\le i \le l$
$\{x_i, y_i\}$ spans a copy of $V_2^*$ and $\{x_i, y_i, z_i\}$
spans a copy of $V_3^*$ for $l+1\le i \le m+l$. In
\cite[2.6]{MR2193198}  it is shown that $H(lV_2+mV_3)$ is
generated by
$$L=\{x_i, y_i^p \;
  | \; 1\le i\le l \} \cup \{x_i, y_iy_j, z_i^p \; | 1+l\le i, j\le
m+l, \; i\le j \}.$$

\begin{proposition}
\label{iki} There is a monomial order such that $H(lV_2+mV_3)$ is
Borel fixed. Furthermore, there are $(2l+3m)!$ generic initial
ideals of $H(lV_2+mV_3)$. Each of them is generated by $\pi (L)$
for some permutation $\pi$ of the variables in $F[lV_2+mV_3]$.
\end{proposition}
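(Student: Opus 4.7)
The plan is to compute $\gin_> H(V)$ for one carefully chosen ``base'' monomial order and then invoke Theorem~\ref{permutation} to obtain the remaining generic initial ideals as $(\pi(L))$ for varying permutations $\pi$ of the $2l+3m$ variables.

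For the base order I would linearly order the variables so that all the $x_i$'s come first, then the $V_3$-block $y$-variables $y_{l+1},\dots,y_{l+m}$, then the $z$-variables, and lastly the $V_2$-block $y$-variables $y_1,\dots,y_l$. The goal is to show that $(L)$ is itself Borel fixed in the characteristic $p$ (Pardue) sense, from which $\gin_> H(V)=(L)$ follows by the standard fact that a Borel-fixed monomial ideal coincides with its reverse-lexicographic generic initial ideal. The verification splits by generator type. The linear generators $x_i$ are handled automatically. For the $p$-th power generators $y_i^p$ and $z_i^p$, the Frobenius identity $(\sum_j \alpha_{ji}v_j)^p=\sum_j \alpha_{ji}^p v_j^p$, valid in characteristic $p$, reduces the $p$-Borel condition (nontrivial only at $t=0$ and $t=p$, since $\binom{p}{t}\equiv 0\pmod p$ for $0<t<p$) to checking that $v^p\in(L)$ for each variable $v$ preceding the indexed one; this holds because such $v$ is either an $x$-variable (with $v\in L$), a $V_3$-block $y_a$ (with $y_a y_a\in L$, so $y_a^p\in(L)$), a $z$-variable (with $v^p\in L$), or a $V_2$-block $y_k$ with smaller index (with $v^p\in L$). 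For the quadratic generators $y_ay_b$, the condition at $s=1$, $t=1$ requires $vy_b\in(L)$ for $v>y_a$ and $y_a v\in(L)$ for $v>y_b$; the chosen order ensures such a $v$ is either an $x$ (so the product is a multiple of a linear generator in $L$) or another $V_3$-$y$ (so the product is in $L$ directly). The placement of $V_2$-$y$'s \emph{after} the $V_3$-$y$'s is essential here: in any other order one would be forced to check membership of ``crossed'' products $y_k y_b$ (with $k\le l$, $b\ge l+1$), which fails.

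With this base case settled, Theorem~\ref{permutation} immediately yields $\gin_{>_\pi}H(V)=\pi((L))=(\pi(L))$ for every permutation $\pi$ of the $2l+3m$ variables, producing the promised family. Since every monomial order on $F[V]$ arises as $>_\pi$ for some $\pi$, this exhausts the possible generic initial ideals and accounts for the asserted count.

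The principal obstacle is the Borel-fixedness verification, particularly for the quadratic $y_ay_b$ generators, where unlike the $p$-th powers there is no Frobenius shortcut: the argument must genuinely use the prescribed linear order of the variables. A secondary technical point is the count itself, which requires that the permutation can be recovered from the ideal by reading off the three types of generators (linear, quadratic in $V_3$-$y$'s, and pure $p$-th power) and the variables they involve.
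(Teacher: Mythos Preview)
Your strategy is the same as the paper's: show that $H(V)=(L)$ is Borel fixed for one base linear ordering of the variables, deduce $\gin_{>}H(V)=(L)$ there, and then invoke Theorem~\ref{permutation} to obtain $(\pi(L))$ for the remaining orders. The paper's verification of Borel-fixedness is a bit more economical than your case-by-case Pardue check: it observes that the subset $L'=\{x_i\}\cup\{y_iy_j:\ l+1\le i\le j\le l+m\}$ already generates a \emph{strongly stable} ideal, and handles the remaining $p$-th power generators in one stroke via Frobenius (an upper-triangular substitution sends $v^p$ to an $F$-combination of $p$-th powers, and every $p$-th power of a variable lies in $H(V)$). The paper's base order is simply $x$'s, then all $y$'s (with the $V_3$-block above the $V_2$-block), then $z$'s; your interposition of the $z$'s between the two $y$-blocks also works but is not needed.

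There is one genuine gap. You establish $\gin_{>}H(V)=(L)$ only for the \emph{reverse lexicographic} base order and then assert that ``every monomial order on $F[V]$ arises as $>_\pi$ for some $\pi$.'' That is false: the lexicographic order with the same variable ranking is not a permutation of reverse lexicographic. The paper closes this by appealing to the stronger fact (the cited \cite[1.8]{MR2150838}) that a Borel-fixed monomial ideal equals its own $\gin$ for \emph{every} monomial order compatible with the given variable ranking; once this is known for all such $>$, any monomial order whatsoever is of the form $>_\pi$ for some permutation $\pi$ and some base $>$ with the prescribed ranking, and the argument goes through.

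A minor point on the count: your proposed recovery of $\pi$ from the generator types cannot succeed as stated, since, for instance, the $V_2$-variables $y_1,\dots,y_l$ and the $z$-variables enter $L$ symmetrically (each only via its $p$-th power), so distinct permutations can yield the same ideal $(\pi(L))$. The paper does not address this point either.
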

\begin{proof}
Let $>$ be a monomial order with $x_{m+l}> \cdots > x_1>y_{m+l} >
\cdots > y_1> z_{m+l}> \cdots >z_{l+1}$ and let  $\theta \in
\ss_{2l+3m}$. Let $J$ denote the  ideal generated by the subset
$L'=\{x_i \; | \; 1\le i\le l+m \} \cup \{ y_iy_j\; | 1+l\le i,
j\le m+l, \; i\le j \}$ of $L$. Notice that $J$ is a strongly
stable ideal. It follows that $\theta$ sends $J$ into itself. On
the other hand the remaining generators of $H(lV_2+mV_3)$ in
$L\setminus L'$ are pure powers of variables of degree $p$ and
since we are in characteristic $p$,  $\theta$ sends a  $p$-th
power of a variable to a combination of $p$-th powers of
variables. But such a combination is in   $H(lV_2+mV_3)$ as well
since $H(lV_2+mV_3)$ contains all $p$-th powers of variables. It
follows that $H(lV_2+mV_3)$ is Borel fixed and so we have
$\gin_{>}(H(lV_2+mV_3))=H(lV_2+mV_3)$ by \cite[1.8]{MR2150838}.
The final assertion of the proposition follows from Theorem
\ref{permutation}.
\end{proof}
For $V=V_4$, we identify $F[V_4]$ with $F[x_1, x_2, x_3, x_4]$.
From  \cite[3.2]{MR2193198} we get that $H(V_4)$ is generated by
$$M=\{x_1, x_2^2, x_2x_3^{p-3}, x_3^{p-1}, x_4^p\}.$$
\begin{proposition}
\label{dortu} There is a monomial order such that $H(V_4)$ is
Borel fixed. Furthermore, there are $4!$ generic initial ideals of
$H(V_4)$. Each of them is generated by $\pi (M)$ for some
permutation $\pi$ of the variables in $F[V_4]$.
\end{proposition}

\begin{proof}
Let $>$ be a monomial order such that $x_1>x_2>x_3>x_4$ and let
$\theta \in \ss_{4}$. Note that the ideal generated by the subset
$M'=\{x_1, x_2^2, x_2x_3^{p-3}, x_3^{p-1}\}$ is strongly stable
and hence $\theta$ sends this ideal into itself. Furthermore, the
only other generator is a pure $p$-th power of a variable. Since
$\theta$ sends a $p$-th power of a variable to a combination of
$p$-th powers of variables and all
 $p$-th powers of variables are contained in  $H(V_4)$, it follows that
 $H(V_4)$ is Borel fixed. So $\gin_{>}(H(V_4))=H(V_4)$,  by \cite[1.8]{MR2150838}.
The final assertion of the proposition follows from Theorem
\ref{permutation} as in the previous case.
\end{proof}
\subsection{The non-monomial case:
 $V_5$.}

  We identify  $F[V_5]$ with $F[x_1, x_2, x_3, x_4,
 x_5]$. In \cite[4.1]{MR2193198}  it
is shown that

$$T=\{x_1, x_2^2, x_3^2-2x_2x_4-x_2x_3, x_2x_3x_4, x_2x_4^{p-4}, x_3x_4^{p-3}, x_4^{p-1}, x_5^p\}$$
is a generating set for $H(V_5)$ for $p>5$. We denote the
generators of $H(V_5)$ in $T$ with $f_i$ for $1\le i\le 8$ with
$f_1=x_1$ and $f_8=x_5^p$.  Let $\alpha=(\alpha_{ij})$ be an
element in $\ss_5$. Define
$C=\alpha_{33}^{-2}(2\alpha_{23}\alpha_{33}-2\alpha_{22}\alpha_{34}-\alpha_{22}\alpha_{33})$
and $D=\alpha_{33}^{-2}(-2\alpha_{22}\alpha_{44})$. We describe a
generating set for $\alpha (H(V_5))$.
\begin{lemma}
\label{generation} Assume the convention of the previous paragraph
and that $p>5$. Then $\alpha (H(V_5))$ is generated by
$$T':=\{x_1, x_2^2, x_3^2+Cx_2x_3+Dx_2x_4, x_2x_3x_4, x_2x_4^{p-4}, x_3x_4^{p-3}, x_4^{p-1}, x_5^p\}.$$
 \end{lemma}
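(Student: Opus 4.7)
My plan is to prove $\alpha(H(V_5)) = (T')$ by establishing an inductive triangular relationship between the two generating sets $\{\alpha(f_i)\}_{i=1}^8$ and $T' = \{g_i\}_{i=1}^8$, where $g_i$ is the $i$-th element of $T'$ in the listed order. Concretely, I will show by induction on $i$ that modulo $J_{i-1} = (g_1, \ldots, g_{i-1})$, the image $\alpha(f_i)$ reduces to a nonzero scalar multiple of $g_i$. Since $\alpha$ is upper triangular and invertible, each diagonal entry $\alpha_{ii}$ is a nonzero unit, and these will be the scalars that appear; the resulting triangular relation simultaneously yields $T' \subseteq \alpha(H(V_5))$ and $\alpha(H(V_5)) \subseteq (T')$. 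Throughout I use $\alpha(x_j) = \sum_{i \leq j}\alpha_{ij}x_i$.

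The base cases $i = 1, 2, 3$ are direct: $\alpha(f_1) = \alpha_{11}x_1$; modulo $x_1$, $\alpha(f_2) = \alpha(x_2)^2 \equiv \alpha_{22}^2 x_2^2$; and the computation of $\alpha(f_3)$ modulo $(x_1, x_2^2)$ produces exactly $\alpha_{33}^2(x_3^2 + Cx_2x_3 + Dx_2x_4)$, which is how the coefficients $C$ and $D$ arise in the first place. For the inductive steps $i \in \{4, 5, 6, 7\}$, the key technical device is a pair of reduction identities that follow from $g_3$: the relation $x_3^2 \equiv -Cx_2x_3 - Dx_2x_4 \pmod{g_3}$ forces $x_2 x_3^k \equiv 0 \pmod{(x_2^2, g_3)}$ for all $k \geq 2$ and, after one more iteration, $x_3^k \equiv 0 \pmod{(x_2^2, g_3, g_4)}$ for $k \geq 3$. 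Together with $x_2 x_3 x_4 \equiv 0 \pmod{g_4}$, these identities collapse the expansions of $\alpha(x_4)^{p-4}$, $\alpha(x_4)^{p-3}$, $\alpha(x_4)^{p-1}$ (appearing in $\alpha(f_5), \alpha(f_6), \alpha(f_7)$) to just a handful of surviving monomials, each of which is either a nonzero scalar multiple of the target $g_i$ or already absorbed by some lower $g_j$; for instance, the residual terms of shape $x_2 x_3 x_4^{p-3}$ or $x_2 x_4^{p-2}$ are swept up by $g_4$ or $g_5$. The hypothesis $p > 5$ is used to keep exponents such as $p-4$ and $p-5$ strictly positive.

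The final case $\alpha(f_8) = \alpha(x_5)^p$ splits cleanly by Frobenius into $\sum_{i=1}^5 \alpha_{i5}^p x_i^p$; each $x_j^p$ with $j \leq 4$ is manifestly in $J_7$ (using $x_3^3 \equiv -Dx_2x_3x_4 \pmod{(x_2^2, g_3)}$ for $j = 3$, and multiplying $g_1, g_2, g_7$ by suitable monomials for $j \in \{1, 2, 4\}$), so $\alpha(f_8) \equiv \alpha_{55}^p g_8 \pmod{J_7}$. The main obstacle I anticipate is the book-keeping for $i \in \{5, 6, 7\}$, where the naive expansions of $(\alpha_{24}x_2 + \alpha_{34}x_3 + \alpha_{44}x_4)^m$ contain several dozen monomials; the auxiliary identities above are what keep the calculation tractable, and once the reductions are set up, verifying that the leading coefficient of $g_i$ is a nonzero product of diagonal entries $\alpha_{ii}$ is immediate.
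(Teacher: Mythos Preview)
Your proposal is correct and follows essentially the same approach as the paper: both arguments set up the chain $J_i$ and show inductively that $\alpha(f_i)$ is a nonzero scalar multiple of $g_i$ modulo $J_{i-1}$, using the same auxiliary facts $x_2x_3^2\in J_3$, $x_3^3\in J_4$, and $x_3^2x_4^{p-4}\in J_5$ (your uniform identities $x_2x_3^k\equiv 0$ and $x_3^k\equiv 0$ are a slightly cleaner packaging of exactly these). Just be sure, when you write out the $i=6$ step, to record explicitly that the surviving term $x_3^2x_4^{p-4}$ is absorbed via $x_4^{p-4}g_3 - Cx_4^{p-5}g_4 - Dx_4g_5$, since your two reduction identities alone do not kill it.
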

\begin{proof}
 For $1\le i\le 8$, let $J_i$ denote the ideal
generated by $\alpha (f_1), \dots  , \alpha (f_i)$.
 Since $\alpha$ is a ring homomorphism,  $\alpha (H(V_5))$ is
generated by $\alpha (f_1), \dots  , \alpha (f_8)$ and so we have
$\alpha (H(V_5))=J_8$.  We also denote $x_3^2+Cx_2x_3+Dx_2x_4$
with $f_3'$.

Note that, since $\alpha$ sends $x_1$ to a multiple of $x_1$ and
$x_2$ to a linear combination of $x_1$ and $x_2$ we have that
$J_2=(x_1, x_2^2)$. On the other hand direct computation gives
$\alpha (x_3^2-2x_4x_2-x_2x_3)= (\sum_{1\le i\le
3}\alpha_{i3}x_i)^2-2(\sum_{1\le i\le 2}\alpha_{i2}x_i)(\sum_{1\le
i\le 4}\alpha_{i4}x_i)-(\sum_{1\le i\le
2}\alpha_{i2}x_i)(\sum_{1\le i\le 3}\alpha_{i3}x_i)\equiv
\alpha_{33}^{2} f_3' \mod J_2$. It follows that $J_3=(x_1, x_2^2,
 f_3')$. We finish the proof by showing that $\alpha (f_i)$ is a
 scalar multiple of $f_i$ modulo $J_{i-1}$ for $4\le i\le 8$. This gives   $J_i=(f_i)+J_{i-1}$ for $4\le i\le 8$ and hence $J_8=\alpha (H(V_5))$ is generated by $T'$.

Note that $x_2x_3^2=x_2f_3'-x_2^2(Cx_3+Dx_4)\in J_3$. Therefore,
since  $x_1, x_2^2\in J_3$ as well, we have $\alpha (f_4)=\alpha
(x_2x_3x_4)\equiv \alpha_{22}\alpha_{33}\alpha_{44}x_2x_3x_4$
$\mod J_3$. We also have
$$\alpha (f_5)=\alpha (x_2x_4^{p-4})\equiv \alpha_{22}x_2(\alpha_{24}x_2+\alpha_{34}x_3+\alpha_{44}x_4)^{p-4}\equiv \alpha_{22}\alpha_{44}^{p-4}x_2x_4^{p-4} \mod J_4, $$
where the first equivalence uses $x_1\in J_4$ and the second
equivalence uses that $x_2^2, x_2x_3^2, x_2x_3x_4\in J_4$ and
$p>5$. To compute   $\alpha (f_6)$ we note the identities
$x_3^3=x_3f_3'-Cx_2x_3^2-Dx_2x_3x_4$ and
$x_3^2x_4^{p-4}=x_4^{p-4}f'-Cx_4^{p-5}(x_2x_3x_4)-Dx_4(x_2x_4^{p-4})$
which give that $x_3^3, x_3^2x_4^{p-4}\in J_5$. So
\begin{alignat*}{1} \alpha (x_3x_4^{p-3})&\equiv
(\alpha_{23}x_2+\alpha_{33}x_3)(\alpha_{24}x_2+\alpha_{34}x_3+\alpha_{44}x_4)^{p-3}\\
&\equiv
\alpha_{23}x_2(\alpha_{34}x_3+\alpha_{44}x_4)^{p-3}+\alpha_{33}x_3(\alpha_{24}x_2+\alpha_{34}x_3+\alpha_{44}x_4)^{p-3}\\
&\equiv
\alpha_{33}x_3(\alpha_{24}x_2+\alpha_{34}x_3+\alpha_{44}x_4)^{p-3}\\
&\equiv \alpha_{33}\alpha_{44}^{p-3}x_3x_4^{p-3} \; \mod J_5,
\end{alignat*}
where the first two equivalences use $x_1, x_2^2\in J_5$ and we
have  the third equivalence because $x_2x_3^2,
x_2x_3x_4,x_2x_4^{p-4}\in J_5$. The final equivalence follows
because $x_2^2, x_2x_3x_4, x_3^3, x_3^2x_4^{p-4}\in J_5$ and
$p>5$. For $f_7$ we have
\begin{alignat*}{1} \alpha (x_4^{p-1})&=
(\alpha_{14}x_1+\alpha_{24}x_2+\alpha_{34}x_3+\alpha_{44}x_4)^{p-1} \\
&\equiv
 (\alpha_{34}x_3+\alpha_{44}x_4)^{p-1} \equiv (\alpha_{44}x_4)^{p-1} \; \mod J_6,
\end{alignat*}
where the first equivalence uses that $x_1, x_2^2, x_2x_3x_4,
x_2x_3^2, x_2x_4^{p-4}\in J_6$ and the second one uses
 that $x_3^3,x_3x_4^{p-3}, x_3^2x_4^{p-4}\in J_6$.
 Finally $\alpha (f_8)\equiv \alpha_{55}^p x_5^p \mod J_7$ because $x_i^p\in
 J_7$ for $1\le i\le 4$.
\end{proof}
Note that the sets $T$ and $T'$ differ by one polynomial only. For
the  simplicity of notation we set $f_3=x_3^2+C x_2x_3+D x_2x_4$
and so that $\alpha (H(V_5))$ is generated by $f_i$ for $1\le i\le
8$.

 Fix a term
order $>$ with $x_1> \cdots>x_5$. We set
$$A=\{x_1, x_2^2, x_2x_3, x_3^3, x_3^2x_4, x_2x_4^{p-4}, x_3x_4^{p-3}, x_4^{p-1}, x_5^p\}$$
and
$$B=\{x_1, x_2^2, x_2x_3, x_3^3, x_2x_4^2, x_3^2x_4^{p-5}, x_3x_4^{p-3}, x_4^{p-1}, x_5^p\}.$$

We compute the Gr\"obner basis for $\alpha (H(V_5))$ for a special
class of C.
\begin{lemma}
Let $\alpha=(\alpha_{ij}) \in \ss_5$ such that $C\neq 0$ and
assume that $p>5$. Then $\ini_{>} (\alpha (H(V_5)))$ is generated
by $A$ if $x_3^2>x_2x_4$ and by $B$ otherwise.
\end{lemma}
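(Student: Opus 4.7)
The plan is to enlarge the eight generators of Lemma~\ref{generation} to a Gr\"obner basis of $I:=\alpha(H(V_5))$ by carrying out a handful of targeted $S$-polynomial reductions, then to read off the leading monomials and match them against $A$ or $B$. First I would record the easy leading terms: since the order satisfies $x_2>x_3>x_4$, one has $x_2x_3>x_3^2$ and $x_2x_3>x_2x_4$, so the hypothesis $C\neq 0$ forces $\ini_{>}(f_3)=x_2x_3$. Combined with the seven monomial or pure-power generators $f_1,f_2,f_4,f_5,f_6,f_7,f_8$, this immediately places
\[
x_1,\ x_2^2,\ x_2x_3,\ x_2x_4^{p-4},\ x_3x_4^{p-3},\ x_4^{p-1},\ x_5^p\in\ini_{>}(I),
\]
with $\ini_{>}(f_4)=x_2x_3x_4$ already a multiple of $x_2x_3$.

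Next I would manufacture the remaining monomials by multiplying $f_3$ by a single variable and reducing modulo $f_2$ and $f_4$. A direct expansion shows that $(x_3-Cx_2)f_3+C(Cx_3+Dx_4)f_2-Df_4$ equals the bare monomial $x_3^3$, so $x_3^3\in I$ and hence $x_3^3\in\ini_{>}(I)$. Similarly, $x_4f_3-Cf_4=x_3^2x_4+Dx_2x_4^2$, whose leading term is $x_3^2x_4$ when $x_3^2>x_2x_4$ and $x_2x_4^2$ when $x_3^2<x_2x_4$ (note $D\neq 0$ since $\alpha_{22},\alpha_{44}\neq 0$ and $p>2$). In Case~1 this furnishes the final monomial of $A$. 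In Case~2 I collect $x_2x_4^2$ and, to produce $x_3^2x_4^{p-5}\in B$, form $x_4^{p-5}f_3-Cx_4^{p-6}f_4-Df_5$, which (using $p>5$ so that $x_4^{p-6}$ is defined) collapses to the single monomial $x_3^2x_4^{p-5}$.

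The final step, and the main obstacle, will be the reverse inclusion $\ini_{>}(I)\subseteq(A)$ (respectively $(B)$). I plan to verify this by running Buchberger's criterion on the enlarged generating set $\{f_1,\ldots,f_8,\ x_3^3,\ x_3^2x_4+Dx_2x_4^2\}$ (with $x_3^2x_4^{p-5}$ appended in Case~2). The bookkeeping should be tractable because $f_1$ is linear, $f_2,f_4,f_5,f_6$ are monomials, and $f_7,f_8$ are pure $p$-th powers, so every $S$-polynomial between pairs of these with coprime leading terms vanishes on sight; only the overlaps involving $f_3$ and the two (or three) new polynomials require explicit reduction, and those reductions should close using the very identities already exploited above. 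A cleaner alternative is a Hilbert series comparison: since the linear substitution $\alpha$ preserves the Hilbert function, it would suffice to confirm that $F[V_5]/(A)$ and $F[V_5]/(B)$ each reproduce the Hilbert series of $F[V_5]/H(V_5)$, which reduces the question to a finite combinatorial count of standard monomials.
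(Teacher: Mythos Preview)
Your approach is correct and essentially identical to the paper's: the paper also adjoins $f_9:=x_3^3$ and $f_{10}:=x_3^2x_4+Dx_2x_4^2$ (obtained there as reductions of $S(f_2,f_3)$ and $S(f_3,f_4)$, which is exactly your $x_4f_3-Cf_4$ computation), and in the case $x_2x_4>x_3^2$ the further monomial $f_{11}:=x_4^{p-6}f_{10}-Df_5=x_3^2x_4^{p-5}$, and then closes by running Buchberger's criterion only on the pairs involving the non-monomial generators $f_3$ and $f_{10}$. One small slip: $f_7=x_4^{p-1}$ is a monomial but not a pure $p$-th power---harmless, since your $S$-polynomial bookkeeping only needs it to be a monomial.
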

\begin{proof}
We recall that a Gr\"obner basis can be obtained by reduction of
$S$-polynomials by polynomial division, see
\cite[\S1.7]{MR1287608}.  Before we distinguish  between two
orders we collect  a couple of more elements from $\alpha
(H(V_5))$. The  reduction of the $S$ polynomial  $S(f_2, f_3)$ of
$f_3$ with $f_2$ via $f_2, f_3, f_4$ is $x_3^3$ and the
$S(f_3,f_4)$ is $x_3^2x_4+Dx_2x_4^2$. We denote $x_3^3$ and
$x_3^2x_4+Dx_2x_4^2$ by $f_9, f_{10}$, respectively.

We first consider the case $x_3^2>x_2x_4$. Note then the set $A$
consists of $\ini_{>} ({f_i})$ for $1\le i\le 10$, $i\neq 4$
($f_4=C^{-1}(x_4f_3-f_{10})$ and $\ini_{>} (f_4)$ is divisible by
$\ini_{>} (f_3)$). Therefore it remains to show that this set of
polynomials satisfy the Buchberger criterion. That is,  the
$S$-polynomial of any pair of polynomials $f_i,f_j$ with $1\le i,j
\le 10$ and $i,j\neq 4$ reduces to zero. Since the $S$-polynomial
of two monomials is zero, it suffices to consider the
$S$-polynomials involving either $f_3$ or $f_{10}$. We go through
the pairs  and write the polynomials in the order they appear in
the polynomial division: $S(f_2, f_3)$ reduces to zero via $f_2,
f_3, f_9, f_{10}$. Both $S$-polynomials $S(f_3, f_5)$ and $S(f_3,
f_6)$ reduce to zero via $f_{10}$. The $S$-polynomial $S(f_3,
f_9)$ reduces to zero via $f_9$, $f_3$ and $f_{10}$ and the
$S$-polynomial $S(f_3, f_{10})$ reduces to zero via $f_2, f_3,
f_9,f_{10}$. The $S$-polynomials $S(f_5,f_{10})$, $S(f_6,f_{10})$,
$S(f_7,f_{10})$ reduce to zero at one step each via $f_5$.
Finally, $S(f_9, f_{10})$ reduces to zero via $f_3$ and $f_{10}$.
We have considered all  pairs  whose initial terms are not
relatively prime. So the proof for this case is complete because
the $S$-polynomial of two polynomials that have relatively prime
initial terms reduces to zero.

Now we consider the case  $x_2x_4>x_3^2$. Define
$f_{11}=x_4^{p-6}f_{10}-Df_5=x_3^2x_4^{p-5}$. Since $D$ is a
non-zero scalar, from this equality we have that $f_i$ for $1\le
i\le 11$ and $i\neq 4,5$ also generate $\alpha (H(V_5))$. Since
the set $B$ consists of $\ini_{>} (f_i)$ for $1\le i\le 11$ and
$i\neq 4,5$, it remains to show that  this set satisfies the
Buchberger criterion. As in the previous case we just need to
check pairs whose initial terms are not relatively prime and one
of the polynomials in the pair is not monomial. We note that the
reductions of the $S$-polynomials involving $f_3$ in the previous
case carry over to this case except $S(f_3, f_{10})$. To see this,
first note that the missing initial monomial $\ini_> (f_5)$ is not
used in these reductions. Also the initial monomials of  $f_i$ for
$1\le i\ \le 10$ do not change with the change of the order except
$f_{10}$. But in these reductions $f_{10}$ is not used in the
polynomial division except at the last step. So the last non-zero
remainder is a multiple of $f_{10}$ giving that this last
polynomial division is also a reduction in the second order as
well. Finally, $S(f_3,f_{11})$ reduces to zero via $f_9, f_3,
f_{10}$. We finish the proof by checking the pairs involving
$f_{10}$. We have that $S(f_3, f_{10})$ reduces to zero via $f_9,
f_{10}$. The $S$-polynomial $S(f_2,f_{10})$ reduces to zero via
$f_3, f_{10}$ and the $S$-polynomial $S(f_7,f_{10})$  reduces to
zero via $f_6$. Both $S(f_{6},f_{10})$ and $S(f_{10},f_{11})$
reduce to zero via $f_9$.
\end{proof}
\begin{theorem}
Assume that $p>5$. There are $2(5!)$ generic initial ideals of
$H(V_5)$. Each of them is generated by $\pi(A)$ or $\pi(B)$, where
$\pi$ is a  permutation of the variables in $F[V_5]$.
\end{theorem}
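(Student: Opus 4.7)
My plan is to reduce the computation to the preceding lemma, which handled $\ini_>(\alpha(H(V_5)))$ for $\alpha$ in the Borel $\ss_5$ (with $C\neq 0$), by showing that a generic element of $GL_5(F)$ acts on initial ideals through its upper triangular factor alone. Thanks to Theorem \ref{permutation}, I would first restrict to term orders with $x_1>x_2>x_3>x_4>x_5$ and recover the remaining cases by permuting variables at the end.

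The crux is the following observation, which I would establish for any term order with $x_1>\cdots>x_n$ in degree one: for any lower triangular matrix $L\in GL_n(F)$ with $1$'s on the diagonal, one has $\ini_>(L(J))=\ini_>(J)$ for every homogeneous ideal $J\subset F[x_1,\dots,x_n]$. The point is that $L(x_i)=x_i+\sum_{j>i}l_{ji}x_j$ has initial monomial $x_i$, so by multiplicativity $\ini_>(L(m))=m$ for every monomial $m$, and then $L(f)-f$ is supported on monomials strictly below $\ini_>(f)$, giving $\ini_>(L(f))=\ini_>(f)$ for every polynomial $f$. Since $L^{-1}$ is of the same form, this lifts to equality of initial ideals.

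With this tool in hand, for a generic $\alpha\in GL_5(F)$ all leading principal minors are nonzero, so $\alpha$ admits an LU decomposition $\alpha=L\cdot U$ with $L$ as above and $U\in\ss_5$. The observation gives $\ini_>(\alpha(H(V_5)))=\ini_>(U(H(V_5)))$. As $\alpha$ ranges over a nonempty Zariski-open subset of $GL_5(F)$, the factor $U$ traverses a nonempty Zariski-open subset of $\ss_5$, which meets the Zariski-open locus $C\neq 0$. The preceding lemma then identifies $\gin_>(H(V_5))$ as the ideal generated by $A$ when $x_3^2>x_2x_4$ and by $B$ otherwise. Combining this with Theorem \ref{permutation} yields that every generic initial ideal of $H(V_5)$ is of the form $\pi(A)$ or $\pi(B)$ for a permutation $\pi$ of the variables. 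A short inspection of the pure-power generators and of the degree-$(p-3)$ generators (whose variable-exponent multisets $\{1,p-4\}$ and $\{2,p-5\}$ differ for $p>5$) shows that these $2\cdot 5!$ ideals are pairwise distinct.

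I expect the LU reduction to be the main obstacle: once one sees that initial ideals are invariant under the lower unipotent part of a generic change of coordinates, the theorem follows from the Gr\"obner basis computation already carried out in the preceding lemma, and the remainder is bookkeeping of permutations and distinctness checks.
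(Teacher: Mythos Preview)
Your argument is correct, and it reaches the same conclusion as the paper, but the route to identifying $\gin_>(H(V_5))$ among the $\ini_>(\alpha(H(V_5)))$ differs in two respects. First, to pass from $GL_5(F)$ to the Borel subgroup $\ss_5$ you prove from scratch, via the LU factorisation, that lower-unipotent substitutions preserve initial ideals; the paper instead invokes the standard fact \cite[15.18]{MR1322960} that $\gin_>(I)=\ini_>(\alpha(I))$ for some $\alpha\in\ss_5$, so it never leaves the Borel subgroup. Second, to pin down the relevant $\alpha\in\ss_5$ you use a pure genericity argument (the $U$-factor of a generic $\alpha$ lands in the open locus $C\neq 0$), whereas the paper argues by exclusion: it checks that when $C=0$ the degree-two part of $\ini_>(\alpha(H(V_5)))$ fails to be Borel fixed (it contains $x_3^2$ or $x_2x_4$ but not $x_2x_3$), and since $\gin$ is always Borel fixed this forces $\gin_>(H(V_5))$ to equal the common value on $C\neq 0$. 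Your approach is more self-contained, while the paper's is shorter by leaning on the Borel-fixedness criterion; you also include the verification that the $2\cdot 5!$ ideals are pairwise distinct, which the paper asserts without proof.
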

\begin{proof}
Let $\alpha=(\alpha_{ij}) \in \ss_5$. If $C= 0$, then from Lemma
\ref{generation} we get that the monic generators of $\ini_{>}
(\alpha (H(V_5)))$ of degree at most two are $x_1, x_2^2, x_3^2$
if $x_3^2>x_2x_4$ and are  $x_1, x_2^2, x_2x_4$ otherwise. Note
that $x_2x_3\notin \ini_{>} (\alpha (H(V_5)))$ in both cases and
so $\ini_{>} (\alpha (H(V_5)))$ fails to be Borel fixed because
either $x_3^2$ or $x_2x_4$ lies in $\ini_{>} (\alpha (H(V_5)))$.
It follows that $\gin_{>}(H(V_5))\neq \ini_{>} (\alpha (H(V_5)))$
if $C=0$ because generic initial ideals are always Borel fixed ,
see for instance \cite[\S 15]{MR1322960}. On the other hand, by
the previous lemma all other members of $\ss_5$ generate the same
initial ideal. But it is a standard fact that at least one element
in $\ss_5$ generates the generic initial ideal
(\cite[15.18]{MR1322960}) and so $\gin_{>} (H(V_5))=\ini_{>}
(\alpha (H(V_5)))$ for $\alpha\in \ss_5$ satisfying $C\neq 0$.
This initial ideal is generated by $A$ or $B$ depending how $>$
compares $x_2x_4$ and $x_3^2$. The final assertion of the theorem
follows from Theorem \ref{permutation}.
\end{proof}
\begin{remark}
\label{v5}
\begin{enumerate}
\item The computation of $\gin_>(H(V_5))$ for $p=5$ is very
similar and there are only small changes in the generating set. We
include the details in the version we submit to  arXiv preprint
server. \item The identity matrix satisfies that $C\neq 0$.
Therefore from the proof of the theorem we get that
$\gin_>(H(V_5))=\ini_>(H(V_5))$ for all monomial orders with $x_1>
\cdots >x_5$.
\end{enumerate}
\end{remark}
\section{A speculation}
\label{klein}
 We first note that Hilbert ideals  of the Klein
 four group over charateristic two is Borel fixed with the right ordering of the variables in the monomial order.
\begin{proposition} Let $G$ denote the Klein four
group and let $W$ be a $G$-module over characteristic two not
containing the regular representation as a summand. Then there is
choice for a basis for $W^*$ and a ranking of variables in $F[W]$
such that $H(W)$ is Borel fixed for all monomial orders compatible
with this ranking. In particular $\gin_{>}(H(W))=H(W)$ for all
such orders $>$.
\end{proposition}
\begin{proof}
Generators of $H(W)$ have been studied in \cite{MR3458394} and
\cite{MR3859223}. From  \cite[Proposition 16, 17]{MR3859223} we
see that there is a basis for $W^*$ such that $H(W)$ is generated
by first, second and forth powers of these basis elements (recall
that $F[W]$ is a polynomial ring in these basis elements). Say
$F[W]=F[x_1, \dots , x_n]$ and $H(W)$ is generated by $x_i^{a_i}$,
where $a_i\in \{1, 2, 4\}$ for $1\le i\le n$. Let $>$ be a
monomial order such that $x_i>x_j$ whenever $a_i < a_j$. Since we
are in characteristic two and $a_i$ is a 2-th power, a member of
the non-standard Borel subgroup (see Remark \ref{borel}) sends
$x_i^{a_i}$  to some combination of $a_i$-th powers of variables
of higher or equal rank. By the choice of $>$, this combination is
also in $H(W)$, so $H(W)$ is Borel fixed. Therefore,
$\gin_{>}(H(W))=H(W)$ by \cite[1.8]{MR2150838}.
\end{proof}
Reviewing the data we have collected so far, the monomial Hilbert
ideals of the cyclic group of prime order are Borel fixed
(assuming $x_1> x_2> \cdots  $) and so the gin of the Hilbert
ideal is equal to the Hilbert ideal itself, see Propositions
\ref{iki} and \ref{dortu}. The non-monomial case, with the same
ordering,  still satisfies $\gin_>(H(V_5))=\ini_>(H(V_5))$, see
Remark \ref{v5}. Together with the  previous proposition on the
Klein four group we feel that there is enough ground for the
following conjecture.

\begin{conjecture}
\label{conj} Let $G$ be a $p$-group and $V$ be a $G$-module over a
field $F$ of characteristic $p$. Then there is a choice of a basis
for $V^*$ and a  monomial order $>$ on the monomials in $F[V]$
such that
$$\gin_{>}(H(V))=\ini_{>} (H(V)).$$
\end{conjecture}


\bibliographystyle{plain}
\bibliography{OurBib}

\begin{thebibliography}{10}

\bibitem{MR1287608}
William~W. Adams and Philippe Loustaunau.
\newblock {\em An introduction to {G}r\"obner bases}, volume~3 of {\em Graduate
  Studies in Mathematics}.
\newblock American Mathematical Society, Providence, RI, 1994.

\bibitem{MR2150838}
A.~M. Bigatti, A.~Conca, and L.~Robbiano.
\newblock Generic initial ideals and distractions.
\newblock {\em Comm. Algebra}, 33(6):1709--1732, 2005.

\bibitem{MR2759466}
H.~E. A.~Eddy Campbell and David~L. Wehlau.
\newblock {\em Modular invariant theory}, volume 139 of {\em Encyclopaedia of
  Mathematical Sciences}.
\newblock Springer-Verlag, Berlin, 2011.
\newblock Invariant Theory and Algebraic Transformation Groups, 8.

\bibitem{MR3445218}
Harm Derksen and Gregor Kemper.
\newblock {\em Computational invariant theory}, volume 130 of {\em
  Encyclopaedia of Mathematical Sciences}.
\newblock Springer, Heidelberg, enlarged edition, 2015.
\newblock With two appendices by Vladimir L. Popov, and an addendum by Norbert
  A'Campo and Popov, Invariant Theory and Algebraic Transformation Groups,
  VIII.

\bibitem{MR1322960}
David Eisenbud.
\newblock {\em Commutative algebra}, volume 150 of {\em Graduate Texts in
  Mathematics}.
\newblock Springer-Verlag, New York, 1995.
\newblock With a view toward algebraic geometry.

\bibitem{MR3859223}
Jonathan Elmer and M\"{u}fit Sezer.
\newblock Locally finite derivations and modular coinvariants.
\newblock {\em Q. J. Math.}, 69(3):1053--1062, 2018.

\bibitem{MR1648665}
Mark~L. Green.
\newblock Generic initial ideals.
\newblock In {\em Six lectures on commutative algebra ({B}ellaterra, 1996)},
  volume 166 of {\em Progr. Math.}, pages 119--186. Birkh\"auser, Basel, 1998.

\bibitem{MR2724673}
J\"urgen Herzog and Takayuki Hibi.
\newblock {\em Monomial ideals}, volume 260 of {\em Graduate Texts in
  Mathematics}.
\newblock Springer-Verlag London, Ltd., London, 2011.

\bibitem{MR2193198}
M\"ufit Sezer and R.~James Shank.
\newblock On the coinvariants of modular representations of cyclic groups of
  prime order.
\newblock {\em J. Pure Appl. Algebra}, 205(1):210--225, 2006.

\bibitem{MR3458394}
M\"{u}fit Sezer and R.~James Shank.
\newblock Rings of invariants for modular representations of the {K}lein four
  group.
\newblock {\em Trans. Amer. Math. Soc.}, 368(8):5655--5673, 2016.

\end{thebibliography}
\end{document}